\documentclass[10pt]{amsart}
\usepackage[english]{babel}
\usepackage{amssymb}
\usepackage{amsmath}
\usepackage{srcltx}
\usepackage{lscape}

\thispagestyle{empty}

\textheight 21.5cm

\textwidth 14cm
\topmargin -0.6cm
\oddsidemargin 1cm
\evensidemargin 1cm

\newtheorem{dummy}{Dummy}

\newtheorem{theorem}[dummy]{Theorem}
\newtheorem{proposition}[dummy]{Proposition}
\newtheorem{corollary}[dummy]{Corollary}

\theoremstyle{definition}
\newtheorem{definition}{Definition}

\newtheorem{example}[dummy]{Example}

\newcommand{\ignore}[1]{}

\author{C. Brown}
\author{S. Pumpl\"un}
\email{christian\_jb@hotmail.co.uk; susanne.pumpluen@nottingham.ac.uk}
\address{School of Mathematical Sciences\\
University of Nottingham\\
University Park\\
Nottingham NG7 2RD\\
United Kingdom
}

\keywords{Skew polynomial, Ore polynomial,  cyclic algebra, cyclic extension.}

\subjclass[2010]{Primary: 17A35; Secondary: 17A60, 16S36}

\begin{document}

\title[Nonassociative cyclic extensions]
{Nonassociative cyclic extensions  of fields and central simple algebras}

\begin{abstract}
We define nonassociative cyclic extensions of degree $m$ of both fields and central simple algebras over fields.
If a suitable field contains a primitive  $m$th (resp., $q$th) root of unity,  we show that
suitable nonassociative generalized cyclic division algebras yield nonassociative cyclic extensions of degree
$m$ (resp., $qs$).
Some of Amitsur's classical results on non-commutative associative  cyclic extensions
of both fields and central simple algebras are obtained as special cases.
\end{abstract}

\maketitle

%
\section*{Introduction}
%

Analogously as both for commutative field extensions \cite{AS, A2, A3, Wi} and for associative cyclic
extensions of fields and central simple algebras \cite{Am2},
 nonassocative cyclic extensions of degree $m$ of a field or a central division algebra are investigated separately for
prime characteristics and for the case that the characteristic is zero or a prime $p$ with
$gcd(p,m)=1$. Nonassociative cyclic extensions of degree $p$ in characteristic $p$ were already studied in \cite{P17}.

Let $D$ be a finite-dimensional central division algebra over a field $K$.
An (associative) central division algebra $A$ over a field $F$ is called a non-commutative \emph{cyclic extension of degree} $m$ of
$D$ over $K$, if ${\rm Aut}_F(A)$
has a cyclic subgroup of automorphisms of order $m$ which are all extended from $id_D$, and if $A$ is a free left
 $D$-module of rank $m$ \cite{Am2}. For instance, if $F$ contains a primitive $m$th root of unity, then
  generalized cyclic algebras $(D,\sigma,a)$ are cyclic extensions of $D$ of degree
$m$ \cite[Theorem 6]{Am2}. We recall that a  \emph{generalized cyclic  algebra} $(D,\sigma,a)$ is  a quotient algebra
$D[t;\sigma]/(t^m-a)D[t;\sigma]$, where $D[t;\sigma]$ is a twisted polynomial ring,
 $\sigma\in {\rm Aut}(D)$ is an automorphism such that
$\sigma|_{K}$ has finite order $m$, $F_0= {\rm Fix}(\sigma) \cap K$, and $f(t)=t^m-a\in D[t;\sigma]$ with
$d\in F_0^\times$. The special case where $D=F$ and
$F_0= {\rm Fix}(\sigma)$ yields the cyclic algebra $(F/F_0,\sigma,a)$ \cite[p.~19]{J96}.

 A finite-dimensional central simple algebra $A$ over $F$ is called a \emph{$G$-crossed product}
 if it contains a maximal field extension $K/F$ which is Galois with Galois group $G={\rm Gal}(K/F)$.
 If $G$ is solvable then $A$ is called a  \emph{solvable $G$-crossed product}.
 In \cite{BP2} we revisited a result by Albert  \cite{A} on solvable crossed products
 and gave a proof for Albert's result using generalized cyclic algebras following Petit's approach \cite{P66},
 proving that a $G$-crossed product is solvable if and only if it can be constructed as a
 chain of generalized cyclic algebras.
Hence any solvable $G$-crossed product division algebra is always
a generalized cyclic  division algebra. In particular,  hence if $F$ contains a primitive $m$th root of unity, solvable
crossed product division algebras over $F$ are non-commutative cyclic extensions.

 A generalization of associative cyclic extensions of  simple rings instead of division rings
 was considered in \cite{K}.

In this paper, we define and investigate nonassociative cyclic extensions of degree $m$ of both fields and central simple algebras
 employing nonassociative generalized cyclic division algebras: Let $A$ be a unital nonassociative division algebra.
 Then $A$ is called a \emph{nonassociative cyclic extension of $D$ of degree $m$}, if $A$ is a free
 left $D$-module of rank $m$ and ${\rm Aut}(A)$ has a cyclic subgroup $G$ of order $m$, such that for all $H\in G$,
 $H|_D=id_D$.

We show that if $F$  contains a primitive $m$th root of unity  (i.e., $F$ has characteristic 0 or characteristic $p$ with $gcd(m,p)=1$),  then
the nonassociative generalized cyclic division algebras $(D,\sigma,a)=D[t;\sigma]/(t^m-a)D[t;\sigma]$ with $a\in D^\times$ are nonassociative
 cyclic extensions of $D$ of degree $m$. Additionally,
 the subgroup of order $m$ in ${\rm Aut}_{F_0}(D,\sigma,a)$ that consists of automorphisms extending $id_D$
 contains only inner automorphisms (Corollary \ref{cor:main}). We also investigate the structure of the automorphism groups
 of nonassociative generalized cyclic algebras in general.

Note that nonassociative cyclic division algebras $(K/F,\sigma,a)$ are a special case of
 nonassociative generalized cyclic division algebras. If $F$ contains a primitive $m$th root of unity
 the nonassociative cyclic division algebras $(K/F,\sigma,a)$ are nonassociative cyclic extensions of $K$ of degree $m$.
The subgroup   of the automorphisms extending $id_K$ has order $m$, is isomorphic to ${\rm ker}(N_{K/F})$,
 and contains only inner automorphisms.  If $F$ has no non-trivial $m$th root of unity and
 $a \in K^{\times}$ is not contained in any proper subfield of $K$,
all automorphisms of $(K/F,\sigma,a)$ are inner and leave $id_K$ fixed (Theorem \ref{thm:inner}).

We point out that nonassociative generalized cyclic algebras have been recently successfully
used both in constructing space-time block codes and linear codes \cite{P15, PS15.4, P13.2, PS15}.

The paper is organized as follows: After introducing the basic terminology in Section \ref{sec:prel}, we
define nonassociative cyclic extensions and nonassociative generalized cyclic algebras in Section \ref{sec:nonass}
and investigate nonassociative cyclic extensions of a field.
In Section \ref{sec:gencyclic} we show when generalized cyclic division algebras $(D,\sigma,d)$ are
nonassociative cyclic extensions of $D$ of degree $m$.
We briefly look at the question when
 a nonassociative overring is a nonassociative cyclic extension of a field or a central simple algebra in Section
  \ref{sec:overring}.
\\\\
The results presented in this paper complements the ones for the nonassociative algebras
$(K,\delta,d)=K[t;\delta]/K[t;\delta] f(t)$ for  $f(t)=t^p-t-d\in K[t;\delta]$ constructed using a field $K$
 of characteristic $p$ together with
some derivation $\delta$ with minimum polynomial $g(t)=t^p-t\in F[t]$, $F={\rm Const}(\delta)$, and of
 the nonassociative algebras $(D,\delta,d)=D[t;\delta]/D[t;\delta]f(t)$
for $f(t)=t^p-t-d\in D[t;\delta]$ constructed using a division algebra $D$, where the derivation $\delta$ has minimum polynomial
$g(t)=t^p-t\in F[t]$ and the field $F$ characteristic $p$.
\cite{P17}.

%
%

\section{Preliminaries} \label{sec:prel}


\subsection{Nonassociative algebras} \label{subsec:nonassalgs}


Let $F$ be a field and let $A$ be an $F$-vector space. $A$ is an
\emph{algebra} over $F$ if there exists an $F$-bilinear map $A\times
A\to A$, $(x,y) \mapsto x \cdot y$, denoted simply by juxtaposition
$xy$, the  \emph{multiplication} of $A$. An algebra $A$ is called
\emph{unital} if there is an element in $A$, denoted by 1, such that
$1x=x1=x$ for all $x\in A$. We will only consider unital algebras without saying so explicitly.

The {\it associator} of $A$ is given by $[x, y, z] =
(xy) z - x (yz)$. The {\it left nucleus} of $A$ is defined as ${\rm
Nuc}_l(A) = \{ x \in A \, \vert \, [x, A, A]  = 0 \}$, the {\it
middle nucleus} of $A$ is ${\rm Nuc}_m(A) = \{ x \in A \, \vert \,
[A, x, A]  = 0 \}$ and  the {\it right nucleus} of $A$ is
${\rm Nuc}_r(A) = \{ x \in A \, \vert \, [A,A, x]  = 0 \}$. ${\rm Nuc}_l(A)$, ${\rm Nuc}_m(A)$, and ${\rm Nuc}_r(A)$ are associative
subalgebras of $A$. Their intersection
 ${\rm Nuc}(A) = \{ x \in A \, \vert \, [x, A, A] = [A, x, A] = [A,A, x] = 0 \}$ is the {\it nucleus} of $A$.
${\rm Nuc}(A)$ is an associative subalgebra of $A$ containing $F1$
and $x(yz) = (xy) z$ whenever one of the elements $x, y, z$ lies in
${\rm Nuc}(A)$.   The
 {\it center} of $A$ is ${\rm C}(A)=\{x\in A\,|\, x\in \text{Nuc}(A) \text{ and }xy=yx \text{ for all }y\in A\}$.

An algebra $A\not=0$ is called a \emph{division algebra} if for any
$a\in A$, $a\not=0$, the left multiplication  with $a$, $L_a(x)=ax$,
and the right multiplication with $a$, $R_a(x)=xa$, are bijective.
 If $A$ has finite dimension over $F$, $A$ is a division algebra if
and only if $A$ has no zero divisors \cite[ pp. 15, 16]{Sch}.
An element $0\not=a\in A$ has a \emph{left inverse} $a_l\in A$, if
$R_a(a_l)=a_l a=1$, and a \emph{right inverse}
 $a_r\in A$, if $L_a(a_r)=a a_r=1$. If  $m_r=m_l$ then we denote this element by
$m^{-1}$.

An automorphism $G\in {\rm Aut}_F(A)$ is an \emph{inner automorphism}
if there is an element $m\in A$ with left inverse $m_l$ such
that $G(x) = (m_lx)m$ for all $x\in A$. We denote such an automorphism by $G_m$.
The set of inner automorphisms $\{G_m\,|\, m\in {\rm Nuc}(A) \text{ invertible} \}$ is a subgroup of ${\rm Aut}_F(A)$.
Note that if the nucleus of $A$ is commutative, then for all $0\not=n\in
{\rm Nuc}(A)$, $G_n(x)=(n^{-1}x)n=n^{-1}x n$ is an inner automorphism of $A$
such that ${G_n}|_{{\rm Nuc}(A)}=id_{{\rm Nuc}(A)}$.


\subsection{Division algebras obtained from twisted polynomial rings}


Let $D$ be a unital division ring and $\sigma$ a ring automorphism of $D$. The \emph{twisted polynomial ring} $D[t;\sigma]$
is the set of polynomials $a_0+a_1t+\dots +a_nt^n$
with $a_i\in D$, where addition is defined term-wise and multiplication by
$ta=\sigma(a)t \quad (a\in D)$ \cite{O}. That means,
$at^nbt^m= a \sigma^{n}(b)t^{n+m} \text{ and } t^na=\sigma^n(a)t^n$
for all $a,b\in D$ \cite[p.~2]{J96}.
$R=D[t;\sigma]$ is a left principal ideal domain  and
there is a right division algorithm in $R$, i.e. for all $g,f\in R$, $f\not=0$, there exist unique $r,q\in R$
  such that ${\rm deg}(r)<{\rm deg}(f)$ and $g=qf+r$ \cite[p.~3]{J96}.
(Our terminology is the one used by Petit \cite{P66} and
 different from Jacobson's \cite{J96}, who calls what we call right a left division algorithm and vice versa.)

 An element $f\in R$ is \emph{irreducible} in $R$ if  it is no unit and it has no proper factors, i.e there do not exist
 $g,h\in R$
 such that $f=gh$ \cite[p.~11]{J96}.

 Let $f\in D[t;\sigma]$ be of degree $m$ and let ${\rm mod}_r f$ denote the remainder of right division by $f$.
 Then the vector space $R_m=\{g\in D[t;\sigma]\,|\, {\rm deg}(g)<m\}$ together with the multiplication
 $$g\circ h=gh \,\,{\rm mod}_r f $$
 becomes a unital nonassociative algebra $S_f=(R_m,\circ)$ over $F_0=\{z\in D\,|\, zh=hz \text{ for all } h\in S_f\}$
 (cf. \cite[(7)]{P66}), and $F_0$  is a subfield of $D$.
 We also denote this algebra $R/Rf$.

We note that when ${\rm deg}(g){\rm deg}(h)<m$, the multiplication of $g$ and $h$ in $S_f$ is the same as the multiplication
of $g$ and $h$ in $R$ \cite[(10)]{P66}.

A twisted polynomial $f\in R$ is \emph{right-invariant} if  $fR\subset Rf$.
  If $f$ is right invariant then $Rf$ is a two-sided ideal and conversely, every two-sided ideal in $R$ arises this way.

$S_f$ is associative if and only if $f$ is right-invariant.
In that case, $S_f$ is the usual quotient algebra  $D[t;\delta]/(f)$ \cite[(9)]{P66}.

%
%

\section{Nonassociative generalized cyclic algebras and nonassociative cyclic extensions} \label{sec:nonass}

 In the following, let $D$ be a division algebra which is
finite-dimensional over its center $F={\rm C}(D)$  and $\sigma\in {\rm Aut}(D)$ such that
$\sigma|_{F}$ has finite order $m$  and fixed field $F_0={\rm Fix}(\sigma)\cap F$.
 Note that $F/F_0$ is automatically a cyclic Galois field extension of
 degree $m$ with $\mathrm{Gal}(F/F_0) = \langle \sigma |_{F} \rangle$.

\subsection{}
Following Jacobson \cite[p.~19]{J96},
an (associative) \emph{generalized cyclic algebra} is an associative algebra
$S_f=D[t;\sigma]/D[t;\sigma]f$ constructed using a right-invariant twisted polynomial
$$f(t)=t^m-d\in D[t;\sigma]$$
with $d\in F_0^\times$.
We write $(D,\sigma, d)$ for this algebra. If $D$ is a central simple algebra over $F$ of degree $n$, then
$(D,\sigma, d)$ is a central simple algebra over $ F_0$ of degree $mn$
and
the centralizer of $D$ in $(D,\sigma, d)$ is $F$ \cite[p.~20]{J96}.
In particular, if $D=F$, $F/F_0$ is a cyclic Galois extension of degree $m$ with Galois group generated by
$\sigma$ and $ f(t)=t^m-d\in F[t;\sigma]$, we obtain the cyclic algebra $(F/F_0,\sigma,d)$ of degree $m$.

This definition generalizes to nonassociative algebras as follows:

\begin{definition}
A \emph{nonassociative generalized  cyclic algebra of degree $mn$}  is an algebra
$S_f=D[t;\sigma]/D[t;\sigma]f$ over $F_0$
with $f(t)=t^m-d\in D[t;\sigma]$, $d\in D^\times$.  We denote this algebra by $(D,\sigma, d)$.
\end{definition}

The algebra
$A=(D, \sigma, d)$, $d\in D^\times$, has dimension $m^2n^2$ over $F_0$.
In particular, if $D=F$ and
 $F/F_0$ is a cyclic Galois extension of degree $m$ with Galois group generated by $\sigma$, then
 $(F/F_0,\sigma,d)$ is a nonassociative cyclic algebra \cite{S12}.
 $A$ is associative if and only if $d\in F_0$. If  $(D, \sigma, d)$ is not associative then
${\rm Nuc}_l(A)={\rm Nuc}_m(A)=D$ and ${\rm Nuc}_r(A)=\{g\in S_f\,|\, fg\in Rf\}.$

$(D, \sigma, d)$ is a division algebra over
$F_0$ if and only if $f(t)=t^m-d\in D[t;\sigma]$ is irreducible \cite[(7)]{P66}.
 Moreover, we know that
 $f(t)=t^2-d\in D[t;\sigma]$ is irreducible  if and only if $\sigma(z)z\not=d$ for all $z\in D$,
 $f(t)=t^3-d\in D[t;\sigma]$ is irreducible  if and only if $d\not=\sigma^2(z)\sigma(z)z$
 for all $z\in D$,  and $f(t)=t^4-d\in D[t;\sigma]$ is irreducible  if and only if
$$ \sigma^2(y)\sigma(y)y+\sigma^2(x)y+\sigma^2(y)
\sigma(x)\not=0 \text{ or } \sigma^2(x)x+\sigma^2(y)\sigma(y)x \not=d$$
for all $x,y\in D$ (cf. \cite{P66} or \cite{P15}, \cite[Theorem 3.19]{CB}, see also \cite{BP18}). More generally, if $F_0$
contains a primitive $m$th root of unity and $m$ is prime then $f(t)=t^m-d\in D[t;\sigma]$ is irreducible if and only if
$d\not=\sigma^{m-1}(z)\cdots\sigma(z)z$ for all $z\in D$ (\cite[Theorem 3.11]{CB}, see also \cite[Theorem 6]{P15}).

Amitsur's definition \cite{Am2} of cyclic extensions generalizes to the nonassociative setting as follows:

\begin{definition} Let $m\geq 2$.
Let $A$ be a nonassociative division algebra with center $F_0$ and $D$ an associative division algebra with
center $F$. Then $A$ is a \emph{nonassociative cyclic extension} of $D$ of degree $m$, if $A$ is a free
 left $D$-module of rank $m$ and ${\rm Aut}(A)$ has a cyclic subgroup $G$ of order $m$, such that for all $H\in G$,
 $H|_D=id$.
\end{definition}

\subsection{Nonassociative cyclic extensions of a field}

For a nonassociative cyclic algebra $(K/F,\sigma,d)$ of degree $m$,  and for all $k\in K$
 such that $N_{K/F}(k)=1$, the map
  $$H_{id,k}(\sum_{i=0}^{m-1}a_it^i)=  a_0 + \sum_{i=1}^{m-1} a_i \big( \prod_{l=0}^{i-1}\sigma^l(k) \big) t^i$$
 is an inner $F$-automorphism of $(K/F,\sigma,d)$ extending  $id_K$.
 The subgroup generated by the automorphisms $H_{id,k}$ is isomorphic to ${\rm ker}(N_{K/F})$ \cite[Theorem 19]{BP}.

The maps $H_{id,k}$ are the only $F$-automorphisms of $(K/F,\sigma,d)$, unless for some $j\in\{1,\dots,m-1\}$,
 $\sigma^j$ can be extended to an $F$-automorphism of $(K/F,\sigma,d)$ as well.
More precisely, the automorphism $\tau=\sigma^j$ with $j\in\{1,\dots,m-1\}$ can be extended to an $F$-automorphism $H$ of
$(K/F,\sigma,d)$, if and only if there is an element
 $k\in K$ such that
  \begin{equation}\label{condition_on_k}
  \sigma^j(d)= N_{K/F}(k)d.
  \end{equation}
   The extension then has the form $H=H_{\tau,k}$ with
 \begin{equation}\label{automorphism_of_Sf form of H}
 H_{\tau,k}(\sum_{i=0}^{m-1}a_it^i)=\tau(a_0) + \sum_{i=1}^{m-1} \tau(a_i)
\big( \prod_{l=0}^{i-1}\sigma^l(k) \big) t^i
\end{equation}
\cite[Theorem 4]{BP}. We then immediately get the following partial generalization of \cite[Theorem 6]{Am2}:

\begin{theorem} \label{thm:Nonassociative cyclic algebra is cyclic extension}
Suppose $F$ contains a primitive $m$th root of unity $\omega$, $A = (K/F,\sigma,d)$ is a nonassociative cyclic division
algebra of degree $m$ over $F$, and $d \in K \setminus F$. Then $A$ is a nonassociative cyclic extension of $K$ of degree $m$.
 The generating automorphism of the subgroup of ${\rm Aut}_F(A)$ of order $m$ is given by $H_{id, \omega}$.
\end{theorem}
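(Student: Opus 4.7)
The plan is to verify the two defining conditions of a nonassociative cyclic extension of $K$ of degree $m$: that $A$ is a free left $K$-module of rank $m$, and that $\mathrm{Aut}(A)$ has a cyclic subgroup of order $m$ each of whose elements restricts to $\mathrm{id}_K$. The first condition is immediate from the construction $A = K[t;\sigma]/K[t;\sigma](t^m-d)$: the right division algorithm by $t^m - d$ shows that $\{1, t, t^2, \ldots, t^{m-1}\}$ is a left $K$-basis of $A$, so $A$ is free of rank $m$ over $K$.

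For the second condition, I would exhibit $\langle H_{\mathrm{id},\omega}\rangle$ as the required cyclic subgroup of $\mathrm{Aut}_F(A)$. By the characterization of inner $F$-automorphisms extending $\mathrm{id}_K$ that is recalled just before the theorem, any $k \in K$ with $N_{K/F}(k) = 1$ gives an automorphism $H_{\mathrm{id},k}$ of $A$ restricting to $\mathrm{id}_K$. Since $\omega \in F \subseteq K$, one has $N_{K/F}(\omega) = \omega^{[K:F]} = \omega^m = 1$, so $H_{\mathrm{id},\omega}$ qualifies, and it is automatically inner and $F$-linear.

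It remains to verify that $H_{\mathrm{id},\omega}$ has order exactly $m$. Because $\sigma$ fixes $F$, all factors in $\prod_{l=0}^{i-1}\sigma^l(\omega)$ equal $\omega$, so the defining formula simplifies to $H_{\mathrm{id},\omega}(t^i) = \omega^i t^i$, while $H_{\mathrm{id},\omega}$ fixes $K$. Iterating, using that $H_{\mathrm{id},\omega}$ is a ring homomorphism fixing $\omega$, gives $H_{\mathrm{id},\omega}^j(t^i) = \omega^{ji} t^i$; equivalently, $H_{\mathrm{id},\omega}^j = H_{\mathrm{id},\omega^j}$ as elements of the subgroup isomorphic to $\ker(N_{K/F})$ described earlier. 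This power equals the identity precisely when $\omega^j = 1$, i.e., precisely when $m \mid j$, so $\langle H_{\mathrm{id},\omega}\rangle$ is cyclic of order exactly $m$, completing the verification.

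I do not foresee any genuine obstacle: the argument is a direct specialization of the automorphism description cited from \cite{BP} to the parameter $k = \omega$, combined with the observation that $\omega$ being a primitive $m$th root of unity in $F$ makes both $N_{K/F}(\omega)=1$ and $\mathrm{ord}(\omega) = m$. The hypothesis $d \in K \setminus F$ records only that the theorem concerns the genuinely nonassociative setting, complementary to Amitsur's classical result \cite[Theorem 6]{Am2} where $d \in F_0 = F$.
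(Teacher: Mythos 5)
Your proof is correct and follows essentially the same route as the paper: both exhibit $\langle H_{\mathrm{id},\omega}\rangle$ as the required cyclic subgroup of order $m$ consisting of automorphisms extending $\mathrm{id}_K$, the only difference being that the paper outsources the order computation to a citation of \cite[Theorem 20]{BP} while you verify directly that $H_{\mathrm{id},\omega}^j = H_{\mathrm{id},\omega^j}$ and hence that the order is exactly $m$.
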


\begin{proof}
 $\langle H_{\mathrm{id},\omega}\rangle$ is a cyclic subgroup of $\mathrm{Aut}_F(A)$ of order $m$ by \cite[Theorem 20]{BP}.
It consists of automorphisms extending $id_K$, therefore $A$ is a nonassociative cyclic extension of $K$.
\end{proof}

\begin{corollary}
If $m$ is prime, $F$ contains a primitive $m$th root of unity and $K/F$ is a cyclic Galois extension of degree $m$,
 then $K$ has a nonassociative cyclic extension of degree $m$.
\end{corollary}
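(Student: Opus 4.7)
The plan is to combine the irreducibility criterion for $t^m - d$ quoted just before the target theorem with Theorem \ref{thm:Nonassociative cyclic algebra is cyclic extension}. Since Theorem \ref{thm:Nonassociative cyclic algebra is cyclic extension} already asserts that every nonassociative cyclic division algebra $(K/F,\sigma,d)$ with $d\in K\setminus F$ is a nonassociative cyclic extension of $K$ of degree $m$, the only thing left to do is to produce a single such $d$ for which $(K/F,\sigma,d)$ is in fact a division algebra.

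Let $\sigma$ be a generator of $\mathrm{Gal}(K/F)$, so that $\sigma$ has order $m$ and $F = \mathrm{Fix}(\sigma)$. I would pick any element $d \in K \setminus F$; such $d$ exists because $[K:F] = m \geq 2$. Because $m$ is prime and $F$ contains a primitive $m$th root of unity, the irreducibility criterion recalled in the paper (the case $D = K$, $F_0 = F$) says that $f(t) = t^m - d \in K[t;\sigma]$ is irreducible if and only if
\[
d \neq \sigma^{m-1}(z)\sigma^{m-2}(z)\cdots\sigma(z)z \qquad \text{for all } z \in K.
\]
But the right-hand side is exactly $N_{K/F}(z)$, which lies in the fixed field $F$ of $\sigma$. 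Since $d \notin F$ by choice, the inequality holds trivially for every $z \in K$, so $f(t)$ is irreducible.

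Hence $(K/F,\sigma,d)$ is a nonassociative division algebra over $F$ of degree $m$ with $d\in K\setminus F$. Theorem \ref{thm:Nonassociative cyclic algebra is cyclic extension} then yields that $(K/F,\sigma,d)$ is a nonassociative cyclic extension of $K$ of degree $m$, with generating automorphism $H_{\mathrm{id},\omega}$, finishing the proof.

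There really is no hard step: the only point worth flagging is the observation that the values of $N_{K/F}$ all lie in $F$, so choosing $d$ off $F$ automatically keeps $d$ out of the image of the norm. The primality of $m$ is used only to ensure we may invoke the clean irreducibility criterion for $t^m - d$; the presence of the primitive $m$th root of unity is used both in that criterion and to apply Theorem \ref{thm:Nonassociative cyclic algebra is cyclic extension}.
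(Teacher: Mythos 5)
Your proof is correct and follows essentially the same route as the paper: pick any $d\in K\setminus F$, verify that $(K/F,\sigma,d)$ is a division algebra, and invoke Theorem \ref{thm:Nonassociative cyclic algebra is cyclic extension}. The only difference is in the middle step, where the paper cites \cite[Corollary 4.5]{S12} directly, while you derive the division property from the irreducibility criterion for $t^m-d$ together with the (correct) observation that $\sigma^{m-1}(z)\cdots\sigma(z)z=N_{K/F}(z)\in F$; both justifications are valid.
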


\begin{proof}
Let $d \in K \setminus F$ and suppose $\mathrm{Gal}(K/F) = \langle\sigma \rangle$. Then since $m$ is prime, the nonassociative
cyclic algebra $A = (K/F,\sigma,d)$ is a division algebra \cite[Corollary 4.5]{S12}. Thus $A$ is a nonassociative cyclic
extension of $K$ by Theorem \ref{thm:Nonassociative cyclic algebra is cyclic extension}.
\end{proof}

If  $F$ has no non-trivial $m$th root of unity, we obtain:

\begin{theorem}\label{thm:inner}
Suppose $F$ has no non-trivial $m$th root of unity.
Let $A = (K/F,\sigma,d)$ be a nonassociative cyclic algebra of degree $m$ where $d \in K^{\times}$ is not contained
in any proper subfield of $K$.
Then every $F$-automorphism of $A$ leaves $K$ fixed and
$$\mathrm{Aut}_F(A) \cong \mathrm{ker}(N_{K/F}).$$
In particular, all automorphisms of $A$ are inner.
\end{theorem}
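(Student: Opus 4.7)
The plan is to use the explicit classification of $F$-automorphisms of $(K/F,\sigma,d)$ recalled just before the theorem: every $F$-automorphism has the form $H = H_{\tau,k}$ with $\tau = \sigma^j$ for some $j \in \{0, 1, \ldots, m-1\}$ and $k \in K$ satisfying the compatibility condition $\sigma^j(d) = N_{K/F}(k) d$. The strategy is to show that the hypotheses on $F$ and $d$ force $j = 0$, so that every automorphism is of the form $H_{\mathrm{id},k}$ with $k \in \ker(N_{K/F})$; the structure statement then follows immediately from \cite[Theorem 19]{BP}.

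First I would suppose $H = H_{\sigma^j, k}$ is an $F$-automorphism with $j \in \{1, \ldots, m-1\}$ and derive a restriction on $N_{K/F}(k)$. Starting from $\sigma^j(d) = N_{K/F}(k) d$ and using the crucial observation that $N_{K/F}(k) \in F$ is fixed by $\sigma$, an easy induction gives
\begin{equation*}
\sigma^{rj}(d) = N_{K/F}(k)^r \, d
\end{equation*}
for every $r \geq 1$. Taking $r = m$ yields $\sigma^{jm}(d) = d$ on the left-hand side, so $N_{K/F}(k)^m = 1$. Hence $N_{K/F}(k)$ is an $m$th root of unity lying in $F$, and the hypothesis that $F$ has no non-trivial $m$th root of unity forces $N_{K/F}(k) = 1$.

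Feeding this back into the compatibility relation gives $\sigma^j(d) = d$, i.e., $d \in \mathrm{Fix}(\sigma^j)$. Since $\mathrm{Gal}(K/F) = \langle\sigma\rangle$ has order $m$, the fixed field $\mathrm{Fix}(\sigma^j)$ has degree $\gcd(m,j) < m$ over $F$ whenever $j \in \{1, \ldots, m-1\}$, so it is a proper subfield of $K$. This contradicts the assumption that $d$ is not contained in any proper subfield of $K$. Therefore $j = 0$, i.e., $\tau = \mathrm{id}_K$, and every $F$-automorphism of $A$ fixes $K$ pointwise.

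It then only remains to collect the consequences. The set of automorphisms $H_{\mathrm{id},k}$ is, by \cite[Theorem 19]{BP}, a group isomorphic to $\ker(N_{K/F})$ and consists entirely of inner automorphisms, yielding $\mathrm{Aut}_F(A) \cong \ker(N_{K/F})$ and the last assertion of the theorem. The main (and really only) conceptual step is the iteration argument showing that the putative ``non-trivial'' automorphism $H_{\sigma^j,k}$ forces $N_{K/F}(k)$ to be a root of unity in $F$; once that is recognized, the two hypotheses dispatch the proof cleanly.
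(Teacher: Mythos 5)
Your proof is correct and follows essentially the same route as the paper: both start from the classification of $F$-automorphisms as maps $H_{\sigma^j,k}$ subject to $\sigma^j(d)=N_{K/F}(k)d$, iterate that relation to force $N_{K/F}(k)$ to be an $m$th root of unity, and then play the two hypotheses off against each other. The only (harmless) difference is organisational: the paper iterates by composing the automorphisms $H_{\sigma^j,k}^r$ and uses the hypothesis on $d$ first to conclude that $N_{K/F}(k)$ is a \emph{primitive} $n$th root of unity, whereas you apply $\sigma^j$ directly to the scalar relation, use the root-of-unity hypothesis first to get $N_{K/F}(k)=1$, and only then invoke the hypothesis on $d$ to rule out $\sigma^j(d)=d$ --- a slightly cleaner bookkeeping of the same argument.
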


\begin{proof}
Every automorphism of $A$ has the form $H_{id,k}$:
suppose that there exist $j \in \{ 1, \ldots, m-1 \}$ and $k \in K^{\times}$ such that
$H_{\sigma^j,k} \in \mathrm{Aut}_{F}(A)$. This implies $H_{\sigma^j,k}^2 = H_{\sigma^j,k} \circ H_{\sigma^j,k} \in
\mathrm{Aut}_{F}(A)$  and
\begin{equation} \label{eqn:Aut(S_f) F does not contain primitive root of unity 1}
\begin{split}
H_{\sigma^j,k}^2 & \Big( \sum_{i=0}^{m-1} x_i t^i \Big) = \sigma^{2j}(x_0)  + \sum_{i=1}^{m-1} \sigma^{2j}(x_i) \Big( \prod_{q=0}^{i-1} \sigma^{j+q}(k) \sigma^q(k) \Big) t^i.
\end{split}
\end{equation}
Now $H_{\sigma^j,k}^2$ must have the form $H_{\sigma^{2j},l}$ for some $l \in K^{\times}$, and comparing
\eqref{automorphism_of_Sf form of H} and \eqref{eqn:Aut(S_f) F does not contain primitive root of unity 1} yields
$l = k \sigma^j(k)$. Similarly, $H_{\sigma^j,k}^3 = H_{\sigma^{3j},s} \in \mathrm{Aut}_{F}(A)$ where
$s = k \sigma^j(k) \sigma^{2j}(k)$.
Continuing in this manner we conclude that the automorphisms $H_{\sigma^j,k}, H_{\sigma^{2j},l}, H_{\sigma^{3j},s},
\ldots$ all satisfy  (\ref{condition_on_k}) implying that
\begin{align} \label{eqn:Aut(S_f) F does not contain primitive root of unity 2}
\begin{split}
\sigma^j(d) &= N_{K/F}(k) d, \\
\sigma^{2j}(d) &= N_{K/F}(k \sigma^j(k)) d = N_{K/F}(k)^2 d, \\
\vdots & \qquad \qquad \vdots \\
d = \sigma^{n j}(d) &= N_{K/F}(k)^{n} d,
\end{split}
\end{align}
where $n = m/\mathrm{gcd}(j,m)$ is the order of $\sigma^j$.
Note that $\sigma^{ij}(d) \neq d$ for all $i \in \{ 1, \ldots, n-1 \}$ since $d$ is not contained in any proper
subfield of $K$. Therefore $N_{K/F}(k)^{n} = 1$ and $N_{K/F}(k)^i \neq 1$ for all $i \in \{1, \ldots, n - 1 \}$
by \eqref{eqn:Aut(S_f) F does not contain primitive root of unity 2}, i.e. $N_{K/F}(k)$ is a primitive $n$th
 root of unity, thus also an $m$th root of unity, a contradiction. This proves the assertion.
\end{proof}

Note that if  $d \in K^{\times}$ is not contained
in any proper subfield of $K$ then $1,d,\dots,d^{m-1}$ are linearly independent over $F$ and thus
$A$ is a division algebra \cite{S12}. In particular, if $m$ is prime then $1,d,\dots,d^{m-1}$ are
linearly independent over $F$. This yields for a field $F$ of arbitrary  characteristic:

\begin{corollary}
Suppose that $F$ has no non-trivial $m$th root of unity.
If $d \in K^{\times}$ is not contained in any proper subfield of $K$ (e.g. if $m$ is prime),
  and ${\rm ker}(N_{K/F})$ has a subgroup of order $m$,
then any cyclic algebra $A=(K/F, \sigma, d)$ is a cyclic extension of $K$ of degree $m$.
\end{corollary}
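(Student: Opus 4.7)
The plan is simply to assemble the ingredients that have already been proved earlier in the paper. Theorem \ref{thm:inner} identifies $\mathrm{Aut}_F(A)$, the remark immediately preceding the statement secures the division property, and then one only has to check the items in the definition of a nonassociative cyclic extension.

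First I would record that $A = (K/F,\sigma,d)$ is a nonassociative division algebra: since $d \in K^{\times}$ lies in no proper subfield of $K$, the powers $1,d,\dots,d^{m-1}$ are linearly independent over $F$, and then the cited result of \cite{S12} guarantees that $A$ is a division algebra. By construction as the quotient $K[t;\sigma]/K[t;\sigma](t^m-d)$, the algebra $A$ is a free left $K$-module of rank $m$ with basis $1,t,\dots,t^{m-1}$.

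Next I would invoke Theorem \ref{thm:inner}: the hypotheses that $F$ contains no non-trivial $m$th root of unity and that $d$ is not contained in any proper subfield of $K$ are exactly those required, so every $F$-automorphism of $A$ restricts to $\mathrm{id}_K$, and the isomorphism $\mathrm{Aut}_F(A) \cong \mathrm{ker}(N_{K/F})$ of that theorem sends the automorphism $H_{\mathrm{id},k}$ to the element $k$. Now by assumption $\mathrm{ker}(N_{K/F})$ contains a subgroup of order $m$; transporting along this isomorphism produces a subgroup $G \leq \mathrm{Aut}_F(A)$ of order $m$ whose elements all fix $K$ pointwise, and by the formula for $H_{\mathrm{id},k}$ these automorphisms are inner.

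The one point that needs a short additional argument is that $G$ is \emph{cyclic}, which is what the definition of a nonassociative cyclic extension requires. This is automatic: under the isomorphism of Theorem \ref{thm:inner}, $G$ corresponds to a finite subgroup of the multiplicative group $K^{\times}$ of a field, and every such finite subgroup is cyclic. Combined with the fact that $A$ is a free left $K$-module of rank $m$, this verifies all the conditions in the definition and completes the proof. I expect the only conceptual obstacle to be precisely this passage from ``subgroup of order $m$'' to ``cyclic subgroup of order $m$''; everything else is a direct application of Theorem \ref{thm:inner} and the preceding remark.
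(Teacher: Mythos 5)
Your proof is correct and follows exactly the route the paper intends (the paper in fact states this corollary without proof, as an immediate consequence of Theorem \ref{thm:inner} and the preceding remark on the division property). Your explicit observation that the order-$m$ subgroup of $\ker(N_{K/F})\leq K^{\times}$ is automatically cyclic, being a finite subgroup of the multiplicative group of a field, correctly closes the one step the paper leaves implicit.
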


\begin{example}
Let $K = \mathbb{F}_{q^m}$
be a finite field,  $q = p^r$ for some prime $p$, $\sigma$ an automorphism of $K$ of order $m\geq 2$ and
$F={\rm Fix}(\sigma)=\mathbb{F}_{q},$
 i.e. $K/F$ is a cyclic Galois extension of degree $m$
with ${\rm Gal}(K/F)=\langle\sigma\rangle.$  Then ${\rm ker}(N_{K/F})$ is a cyclic group of order
$s=(q^m-1)/(q-1)$ and any division algebra $(K/F,\sigma,d)$ has exactly $s$ inner
automorphisms, all of them extending $id_K$.
 The subgroup they generate is cyclic and isomorphic to ${\rm ker}(N_{K/F})$  \cite{BP3}.
  Hence if $m$ divides $s$, which is the case if $F$ contains a primitive $m$th root of unity, then there is a subgroup of automorphisms of order $m$ extending $id_K$ and hence
  $(K/F,\sigma,d)$  is a cyclic extension of $K$ of degree $m$.
\end{example}

%
%

\section{Nonassociative cyclic extensions of a central simple algebra} \label{sec:gencyclic}

\subsection{}
From now  until stated otherwise, let $A = (D, \sigma, d)$ be a nonassociative generalized cyclic algebra of degree $mn$  over $F_0$,
 for some  $d \in  D \setminus F_0$. We first determine the automorphisms of $A$:

\begin{theorem} \label{thm:aut2}
 (i) Suppose $\tau\in {\rm Aut}_{F_0}(D)$ commutes with $\sigma$. Then
$\tau$ can be extended to an automorphism $H \in {\rm Aut}_{F_0}(A)$, if and only if there is some
$k \in F^\times$ such that $\tau(d) = N_{F/F_0}(k) d.$
 In that case, the extension $H$ of $\tau$ has the form $H=H_{\tau,k}$ with
$$H_{\tau,k}( \sum_{i=0}^{m-1} a_i t^i )=\tau(a_0)+ \sum_{i=1}^{m-1}\tau(a_i) \big( \prod_{l=0}^{i-1} \sigma^l(k) \big) t^i.$$
All maps $H_{\tau,k}$ where $\tau \in {\rm Aut}_{F_0}(D)$ commutes with $\sigma$ and where $k \in F^\times$  such that
 $\tau(d) = N_{F/F_0}(k) d$ (hence $N_{F/F_0}(k)^{mn}=1$), are automorphisms of $A$.

 In particular, for $\tau \not= id$ and $d\not\in {\rm Fix}(\tau)$, $N_{F/F_0}(k) \not= 1$.
\\ (ii) $id\in {\rm Aut}(D)$ can be extended to an automorphism $H\in {\rm Aut}_{F_0}(A)$, if and only if there is some
$k\in F^\times$ such that
$N_{F/F_0}(k) = 1.$
 In that case, the extension $H$ of $id$ has the form $H=H_{id,k}$ with
$$H_{id,k}( \sum_{i=0}^{m-1}a_i t^i)= a_0+ \sum_{i=1}^{m-1} a_i \big( \prod_{l=0}^{i-1} \sigma^l(k) \big) t^i.$$
All $H_{id,k}$ where  $k \in F^{\times}$  such that $N_{F/F_0}(k) = 1$
 are automorphisms of $A$.
\end{theorem}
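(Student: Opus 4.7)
The approach is to pin down any $F_0$-automorphism $H$ of $A$ by its action on $D$ and on $t$ separately. Since $d \notin F_0$, $A$ is nonassociative, so $D = \mathrm{Nuc}_l(A) = \mathrm{Nuc}_m(A)$; as $H$ must preserve these nuclei, $H|_D$ is an $F_0$-automorphism of $D$, which we call $\tau$. The only freedom left is $H(t)$. Writing $H(t) = \sum_{i=0}^{m-1} c_i t^i$ and applying $H$ to the defining relation $ta = \sigma(a)t$ gives
\[
\tau(\sigma(a))\,c_i \;=\; c_i\,\sigma^i(\tau(a)) \qquad (a \in D,\ 0 \le i \le m-1).
\]
Under the hypothesis $\tau\sigma = \sigma\tau$, substituting $b = \sigma(\tau(a))$ yields $b c_i = c_i \sigma^{i-1}(b)$ for all $b \in D$. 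The case $i = 1$ forces $c_1 \in Z(D) = F$, while for $i \ne 1$ any nonzero $c_i$ would make $\sigma^{i-1}$ an inner automorphism of $D$, hence trivial on $F$ --- but $\sigma|_F$ has exact order $m$, ruling this out. So $H(t) = kt$ with $k := c_1 \in F^\times$, and expanding $(kt)^i$ inside $A$ gives the claimed formula for $H_{\tau,k}$. Finally $H(t^m) = H(d)$ becomes $(kt)^m = N_{F/F_0}(k)\,d = \tau(d)$, which is the compatibility condition.

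For the converse I would work in the twisted polynomial ring: define a ring homomorphism $\widetilde H: D[t;\sigma] \to D[t;\sigma]$ by $a \mapsto \tau(a)$ for $a \in D$ and $t \mapsto kt$, noting the twist relation is preserved because $k$ is central in $D$ and $\tau\sigma = \sigma\tau$. A short calculation gives $\widetilde H(t^m - d) = N_{F/F_0}(k)\,t^m - \tau(d) = N_{F/F_0}(k)(t^m - d)$, so $\widetilde H$ sends the left ideal $D[t;\sigma]f$ into itself and descends to an $F_0$-linear map $H_{\tau,k}$ on $A$. Multiplicativity with respect to $\circ$ follows from that of $\widetilde H$ together with compatibility with remainders by $f$; injectivity is immediate from $D$ being a division algebra and $\prod_{l=0}^{i-1}\sigma^l(k) \ne 0$, and finite-dimensionality over $F_0$ then gives bijectivity. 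The bound $N_{F/F_0}(k)^{mn} = 1$ I would read off by applying the reduced norm $N_{\mathrm{red}}: D \to F$ to $\tau(d) = cd$ with $c := N_{F/F_0}(k) \in F_0$, which yields $\tau(N_{\mathrm{red}}(d)) = c^n N_{\mathrm{red}}(d)$; iterating $m$ times and using $\tau^m|_F = \mathrm{id}$ (since $\tau|_F \in \mathrm{Gal}(F/F_0)$ has order dividing $m$) forces $c^{mn} = 1$. The ``In particular'' clause is immediate from $d \notin \mathrm{Fix}(\tau)$, and part (ii) is the specialization $\tau = \mathrm{id}$, where compatibility collapses to $N_{F/F_0}(k) = 1$.

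The main obstacle is the rigidity step ruling out $c_i \ne 0$ for $i \ne 1$: once $H(t)$ is forced into the form $kt$ with $k \in F$, the rest is routine verification. The rigidity rests squarely on the combination of $\sigma|_F$ having exact order $m$ and inner automorphisms of $D$ fixing $Z(D) = F$ pointwise; the nonassociativity hypothesis $d \notin F_0$ is what puts $A$ in the regime where $D$ equals the left and middle nuclei, guaranteeing that $H$ preserves $D$ in the first place.
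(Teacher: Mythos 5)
Your proposal is correct and follows essentially the same route as the paper: restrict $H$ to $D$ via invariance of the left nucleus, expand $H(t)$ in the $t$-basis, use the twist relation $ta=\sigma(a)t$ to force $H(t)=kt$ with $k\in F^\times$, and read off the compatibility condition from $H(t^m)=H(d)$. The only minor differences are that you phrase the rigidity step ($c_i=0$ for $i\neq 1$) via inner automorphisms acting trivially on $Z(D)=F$ where the paper restricts the same relation to $z\in F$, and you verify the converse directly by lifting $H_{\tau,k}$ to $D[t;\sigma]$ and descending modulo $D[t;\sigma]f$, where the paper defers this to \cite[Theorem 4]{BP}.
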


\begin{proof}
(i) Let $H \in {\rm Aut}_{F_0}(A)$, then $H|_D\in {\rm Aut}_{F_0}(D)$, since $H$ leaves the left nucleus invariant.
Thus $H|_D = \tau$ for some $\tau\in {\rm Aut}_{F_0}(D)$. Write
$H(t) = \sum_{i=0}^{m-1} k_i t^i$ for some $k_i \in D$, then we have
$$H(tz) = H(t)H(z) = \big( \sum_{i=0}^{m-1} k_i t^i \big) \tau(z) = \sum_{i=0}^{m-1} k_i \sigma^i(\tau(z)) t^i,$$
and
$$H(tz) = H(\sigma(z)t) = \tau(\sigma(z)) \sum_{i=0}^{m-1} k_i t^i = \sum_{i=0}^{m-1} \tau(\sigma(z)) k_i t^i$$
for all $z\in D$. Comparing the coefficients of $t^i$ yields
$$k_i \sigma^i(\tau(z)) = k_i \tau(\sigma^i(z)) = \tau(\sigma(z)) k_i \text{ for all } i =\{0,\dots,m-1\}$$
for all $z \in D$ since $\sigma$ and $\tau$ commute. In particular, we obtain
$$k_i (\tau(\sigma^i(z)) - \tau(\sigma(z))) = 0 \text{ for all } i  \in \{0,\dots,m-1\}$$
for all $z \in F$, i.e. $k_i = 0$ or $\sigma \vert_F = \sigma^i \vert_F$ for all $i  \in \{0, \dots, m-1\}$. As $\sigma \vert_F$ has order $m$, this means $k_i = 0$ for all $1 \neq i \in \{ 0, \ldots, m-1 \}$.
 For $i = 1$, this yields $k_1 \tau(\sigma(z)) = \tau(\sigma(z)) k_1$
for all $z\in D$, hence $k_1 \in F$. This implies $H(t) = kt$ for some $k \in F^{\times}$.

Since
$$H(z t^i) = H(z) H(t)^i = \tau(z) (kt)^i = \tau(z) \Big( \prod_{l=0}^{i-1} \sigma^l(k) \Big) t^i,$$
for all $i \in \{ 1, \ldots, m-1 \}$ and all $z \in D$, $H$ has the form
$$H_{\tau,k}: \sum_{i=0}^{m-1} a_i t^i \mapsto \tau(a_0) + \sum_{i=1}^{m-1} \tau(a_i) \big( \prod_{l=0}^{i-1} \sigma^l(k) \big) t^i,$$
for some $k \in F^{\times}$.

Comparing the constant terms in $H(t)^m = H(t^m) = H(d)$ implies
$$\tau(d) = k \sigma(k) \cdots \sigma^{m-1}(k) d = N_{F/F_0}(k) d.$$
Let $N = N_{F/F_0} \circ N_{D/F}$ be the norm  of the $F_0$-algebra $D$. Applying $N$ to both sides of the equation
 yields $N(d) = N(k)^m N(d)$, so that $N(k)^m = 1$. Now $k \in F^\times$ and $D$ has degree $n$, thus
$$N(k) = N_{F/F_0}( N_{D/F}(k)) = N_{F/F_0}(k^n) = N_{F/F_0}(k)^n,$$
and so $N(k)^m = N_{F/F_0}(k)^{nm} = 1.$

Finally, the fact that the maps $H_{\tau,k}$ are automorphisms when $\tau$ commutes with $\sigma,$ and
$\tau(d)=N_{F/F_0}(k)d,$ can be shown similarly to the proof of \cite[Theorem 4]{BP}, see also
\cite{CB}.
\\ (ii) In particular, for $\tau=id$, we get from (i) that $H$  has the form
$$H_{id,k}: \sum_{i=0}^{m-1}a_i t^i \mapsto a_0+ \sum_{i=1}^{m-1} a_i \big( \prod_{l=0}^{i-1} \sigma^l(k) \big) t^i$$
for some $k\in F^\times$ with $k\sigma(k)\cdots \sigma^{m-1}(k) = N_{F/F_0}(k) = 1.$
\end{proof}

 The above is proved for a more general set-up  in the first author's PhD thesis \cite{CB}.
 Note that the automorphisms $H_{\tau,k}$ are restrictions of automorphisms of the twisted polynomial ring $D[t;\sigma]$.

 \begin{corollary}\label{cor:7}
 (i) The subgroup of $F_0$-automorphisms of  $A$
extending $id_D\in {\rm Aut}_{F_0}(D)$ is isomorphic to
$$\{ k \in F^\times\,|\, k\sigma(k)\cdots \sigma^{m-1}(k) = 1 \}.$$
 (ii)
 If $F_0$ contains a primitive $m$th root of unity $\omega$, then $\langle H_{id,\omega}\rangle$
is a cyclic subgroup of ${\rm Aut}_{F_0}(A)$ of order $m$.
 \end{corollary}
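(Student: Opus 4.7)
The plan is to deduce both statements directly from Theorem \ref{thm:aut2}(ii), which already gives a complete description of the $F_0$-automorphisms of $A$ extending $id_D$: they are exactly the maps $H_{id,k}$ with $k\in F^\times$ and $N_{F/F_0}(k)=1$. Thus a set-level bijection is already in place, and the remaining work is to verify group-theoretic features of the assignment $\Phi\colon k\mapsto H_{id,k}$.

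For part (i), I first check that $\Phi$ is a group homomorphism from $\ker(N_{F/F_0})$ into $\mathrm{Aut}_{F_0}(A)$. Since $H_{id,k}$ fixes $D$ pointwise and sends $t$ to $kt$, the identity $H_{id,k}(t)^i=(kt)^i=\prod_{l=0}^{i-1}\sigma^l(k)\,t^i$ together with the multiplicativity $\prod_{l=0}^{i-1}\sigma^l(k)\cdot\prod_{l=0}^{i-1}\sigma^l(h)=\prod_{l=0}^{i-1}\sigma^l(kh)$ gives $H_{id,k}\circ H_{id,h}=H_{id,kh}$. Injectivity follows because $H_{id,k}=\mathrm{id}$ forces $kt=t$ in the free left $D$-module $A$ (with basis $1,t,\ldots,t^{m-1}$), hence $k=1$; surjectivity onto the subgroup of automorphisms extending $id_D$ is exactly the content of Theorem \ref{thm:aut2}(ii).

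For part (ii), since $\omega\in F_0\subset \mathrm{Fix}(\sigma|_F)$ we have $\sigma^l(\omega)=\omega$ for all $l$, so $N_{F/F_0}(\omega)=\omega^m=1$. Hence $\omega\in\ker(N_{F/F_0})$, so $H_{id,\omega}$ is an automorphism of $A$, and by the isomorphism from part (i), $\langle H_{id,\omega}\rangle$ has the same order as $\omega$ in $F^\times$, namely $m$.

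There is no substantial obstacle, since the deep content is already packaged into Theorem \ref{thm:aut2}. The only items that need explicit attention are the elementary verification of the homomorphism identity $H_{id,k}\circ H_{id,h}=H_{id,kh}$ and the observation that $\omega\in F_0$ collapses $N_{F/F_0}(\omega)$ to $\omega^m$.
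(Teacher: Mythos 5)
Your proposal is correct and follows the same route the paper intends: the corollary is stated without proof as an immediate consequence of Theorem \ref{thm:aut2}(ii), and your argument simply makes explicit the routine verifications (that $k\mapsto H_{id,k}$ is an injective homomorphism onto the subgroup of extensions of $id_D$, and that $N_{F/F_0}(\omega)=\omega^m=1$ for $\omega\in F_0$). No gaps.
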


\subsection{}  We obtain the following generalization of \cite[Theorem 6]{Am2}:

\begin{corollary}
Suppose $F_0$ contains a primitive $m$th root of unity.
If $f(t) = t^m-d \in D[t;\sigma]$ is irreducible, then $A$
 is a nonassociative cyclic extension of $D$ of degree $m$. In particular, if $m$ is prime and
$$d\not=\sigma^{m-1}(z)\cdots\sigma(z)z$$
 for all $z\in D$, then $A$ is a nonassociative cyclic extension of $D$ of degree $m$.
\end{corollary}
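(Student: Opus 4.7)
The plan is to assemble the corollary from three ingredients that are already available in the text: (a) the fact that $A = S_f$ has $\{1,t,\ldots,t^{m-1}\}$ as a left $D$-basis and hence is a free left $D$-module of rank $m$, (b) the criterion that $A$ is a division algebra iff $f = t^m - d$ is irreducible (stated in Section \ref{sec:nonass} via Petit's result), and (c) Corollary \ref{cor:7}(ii), which furnishes a cyclic subgroup of $\mathrm{Aut}_{F_0}(A)$ of order exactly $m$ consisting of automorphisms extending $\mathrm{id}_D$, whenever $F_0$ contains a primitive $m$th root of unity $\omega$.

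First I would fix $\omega \in F_0$ a primitive $m$th root of unity and assume $f(t) = t^m - d$ is irreducible in $D[t;\sigma]$. By Petit's criterion recalled in Section \ref{sec:nonass}, $A = (D,\sigma,d)$ is a (nonassociative) division algebra over $F_0$. By construction, $R_m = \{g \in D[t;\sigma] : \deg(g) < m\}$ is a free left $D$-module of rank $m$ with basis $1,t,\ldots,t^{m-1}$, and this $D$-module structure is the one on $A = S_f$. Next, Corollary \ref{cor:7}(ii) gives that $\langle H_{\mathrm{id},\omega} \rangle \subseteq \mathrm{Aut}_{F_0}(A)$ is a cyclic subgroup of order $m$, and by the definition of $H_{\mathrm{id},\omega}$ every element of this subgroup restricts to $\mathrm{id}_D$ on $D$. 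Combining these three observations verifies the defining conditions of a nonassociative cyclic extension of $D$ of degree $m$.

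For the second assertion, I would simply invoke the irreducibility criterion stated immediately after the definition of $(D,\sigma,d)$: when $m$ is prime and $F_0$ contains a primitive $m$th root of unity, $f(t) = t^m - d$ is irreducible in $D[t;\sigma]$ iff $d \neq \sigma^{m-1}(z)\cdots\sigma(z)z$ for all $z \in D$. Under this hypothesis the first part of the corollary applies and gives that $A$ is a nonassociative cyclic extension of $D$ of degree $m$.

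There is essentially no genuine obstacle here: the corollary is a bookkeeping synthesis of earlier results. The only subtle point to check is that the subgroup from Corollary \ref{cor:7}(ii) really has order exactly $m$ and not some smaller divisor; this is ensured because $\omega$ is a \emph{primitive} $m$th root of unity, so the iterated products $\prod_{l=0}^{i-1}\sigma^l(\omega) = \omega^i$ (recall $\omega \in F_0 \subseteq \mathrm{Fix}(\sigma)$) run through distinct values for $i = 0,1,\ldots,m-1$, which distinguishes the powers $H_{\mathrm{id},\omega}^i$ on the basis element $t$.
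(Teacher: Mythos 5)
Your proposal is correct and follows essentially the same route as the paper's own proof: cite Corollary \ref{cor:7}(ii) for the cyclic subgroup $\langle H_{id,\omega}\rangle$ of order $m$ consisting of automorphisms extending $id_D$, note that irreducibility of $f$ makes $A$ a division algebra that is free of rank $m$ over $D$, and invoke the stated irreducibility criterion for prime $m$. The paper compresses these steps into ``the rest is trivial,'' while you spell them out (including why the order is exactly $m$), but there is no substantive difference.
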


\begin{proof}
If $F_0$ contains a primitive $m$th root of unity $\omega$, then $\langle H_{id,\omega}\rangle$
is a cyclic subgroup of ${\rm Aut}_{F_0}(A)$ of order $m$ by Corollary \ref{cor:7} (ii).
If $m$ is prime, then
$f(t)=t^m-d\in D[t;\sigma]$ is irreducible if and only if
$$d\not=\sigma^{m-1}(z)\cdots\sigma(z)z$$
 for all $z\in D$.  The rest is trivial.
\end{proof}

\begin{proposition}
 Every automorphism $H_{id,k}$ of $A$ is an inner automorphism
$$G_c(\sum_{i=0}^{m-1}a_it^i)=(c^{-1}\sum_{i=0}^{m-1}a_it^i) c$$
for some  $c\in F^\times$ satisfying $k=\sigma(c)c^{-1}$.
\end{proposition}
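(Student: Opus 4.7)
The plan is to use Hilbert 90 to produce $c$ and then verify directly that $G_c = H_{id,k}$.

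First I would recall that, by Theorem \ref{thm:aut2}(ii) (or Corollary \ref{cor:7}(i)), the existence of the automorphism $H_{id,k}$ guarantees $k \in F^\times$ with $N_{F/F_0}(k) = 1$. Since $F/F_0$ is a cyclic Galois extension with $\mathrm{Gal}(F/F_0) = \langle \sigma|_F \rangle$, the classical Hilbert 90 theorem yields some $c \in F^\times$ with $k = \sigma(c) c^{-1}$. This is the candidate; what remains is to show that $G_c$ is a well-defined inner automorphism of $A$ and that it agrees with $H_{id,k}$.

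Next I would check that $c$ lies in $\mathrm{Nuc}(A)$, so that $G_c(x) = c^{-1}xc$ is indeed a well-defined inner automorphism. Since $c \in F \subset D = \mathrm{Nuc}_l(A) = \mathrm{Nuc}_m(A)$, only membership in the right nucleus needs checking. Using the characterization $\mathrm{Nuc}_r(A) = \{g \in S_f : fg \in Rf\}$, I compute $fc = (t^m - d)c = \sigma^m(c)t^m - dc$; because $\sigma|_F$ has order $m$ we have $\sigma^m(c) = c$, and because $c \in F = \mathrm{C}(D)$ commutes with $d$, this gives $fc = cf \in Rf$, so $c \in \mathrm{Nuc}_r(A)$.

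Finally I would compare $G_c$ and $H_{id,k}$ on the basis $\{a_i t^i : a_i \in D\}$. Using $c \in \mathrm{Nuc}(A)$ and the multiplication rule $t^ic = \sigma^i(c)t^i$, a short calculation gives
\[
G_c(a_i t^i) = c^{-1} a_i t^i c = a_i c^{-1}\sigma^i(c) t^i,
\]
where $c^{-1}$ and $a_i$ commute since $c \in F = \mathrm{C}(D)$. On the other hand, substituting $\sigma^l(k) = \sigma^{l+1}(c)\sigma^l(c)^{-1}$ into the formula for $H_{id,k}$ and telescoping the product in the commutative field $F$ gives
\[
\prod_{l=0}^{i-1}\sigma^l(k) = \sigma^i(c) c^{-1},
\]
so $H_{id,k}(a_i t^i) = a_i \sigma^i(c) c^{-1} t^i$, which matches $G_c(a_i t^i)$. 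The case $i=0$ is trivial since $c \in F$ centralizes $D$. This establishes $H_{id,k} = G_c$.

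The only nontrivial ingredient is Hilbert 90, and the only mild obstacle is verifying that $c$ actually lands in the full nucleus; once the computation $fc = cf$ is in hand, the rest is a telescoping identity in the cyclic extension $F/F_0$.
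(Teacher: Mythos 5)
Your proof is correct and follows essentially the same route as the paper's: apply Hilbert 90 to the cyclic extension $F/F_0$ to obtain $c\in F^\times$ with $k=\sigma(c)c^{-1}$, then telescope $\prod_{l=0}^{i-1}\sigma^l(k)=\sigma^i(c)c^{-1}$ to identify $H_{id,k}$ with $G_c$, using $F=\mathrm{C}(D)$ to commute $c$ past the coefficients. Your additional verification that $c\in\mathrm{Nuc}_r(A)$ (via $fc=cf$) is a careful touch the paper leaves implicit, but it does not change the argument.
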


\begin{proof}
For all $k\in F$ such that $ N_{F/F_0}(k)=1$, $H_{id,k}$
 is an $F$-automorphism extending  $id_D$. These are the only $F_0$-automorphisms of $A$, unless
 $\tau\not=id$ can be also extended.
By Hilbert's Satz 90, $N_{F/F_0}(k)=1$ if and only if there is $c\in F^\times$ such that $k=c^{-1}\sigma(c)$
 \cite{LL}.
So there is $c\in F^\times$ such that $k=c^{-1}\sigma(c)$ and
$$k\sigma(k)\cdots \sigma^{i-1}(k)=c\sigma^i(c),\quad i=1.\dots,m-1$$
yields that $H_{id,k}=G$ with
$$G(\sum_{i=0}^{m-1}a_it^i)=
a_0+a_1c^{-1}\sigma(c)t+\sum_{i=2}^{m-1} a_i c^{-1} \sigma^i(c) t^i,$$
which is an inner automorphism, since $G=G_c$ with
$$G_c(\sum_{i=0}^{m-1}a_it^i)=(c^{-1}\sum_{i=0}^{m-1}a_it^i) c.$$
Note that here we use that $F=C(D)$.
\end{proof}

 \begin{corollary} \label{cor:main}
 If $F_0$ contains a primitive $m$th root of unity $\omega$, then $A$ is a cyclic extension of $D$ of order $m$,
 and all automorphisms extending $id_D$ are inner.
 \end{corollary}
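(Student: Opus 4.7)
The plan is to read the corollary as an immediate synthesis of Corollary \ref{cor:7}(ii) with the preceding Proposition, with essentially no further calculation required. First I would invoke Corollary \ref{cor:7}(ii): since $F_0$ contains a primitive $m$th root of unity $\omega$, the group $\langle H_{\mathrm{id},\omega}\rangle$ is a cyclic subgroup of $\mathrm{Aut}_{F_0}(A)$ of order $m$, and by the explicit formula in Theorem \ref{thm:aut2}(ii) each of its elements restricts to $\mathrm{id}_D$. Combined with the fact that $A$ is free of rank $m$ as a left $D$-module (with basis $1, t, \ldots, t^{m-1}$ inherited from $R_m$), this verifies the three defining conditions for $A$ to be a nonassociative cyclic extension of $D$ of degree $m$.

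For the second assertion I would appeal to the preceding Proposition together with Theorem \ref{thm:aut2}(ii). Theorem \ref{thm:aut2}(ii) identifies every $F_0$-automorphism of $A$ extending $\mathrm{id}_D$ uniquely as an $H_{\mathrm{id},k}$ for some $k \in F^\times$ with $N_{F/F_0}(k) = 1$. The Proposition then realizes each such $H_{\mathrm{id},k}$ as an inner automorphism $G_c$, where $c \in F^\times$ satisfies $k = \sigma(c) c^{-1}$ and exists by Hilbert's Satz 90. In particular the generators $H_{\mathrm{id},\omega^i}$ of the cyclic subgroup from the first part are themselves inner, so every automorphism extending $\mathrm{id}_D$ is inner.

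I do not anticipate any genuine obstacle: the substantive work has already been absorbed into Theorem \ref{thm:aut2} and the Proposition, and what remains is bookkeeping. The only mild care to exercise is in interpreting ``all automorphisms extending $\mathrm{id}_D$'' within the $F_0$-linear framework of Theorem \ref{thm:aut2}; this is harmless, since any automorphism of $A$ extending $\mathrm{id}_D$ automatically fixes $F_0 \subseteq F = Z(D) \subseteq D$ and is therefore $F_0$-linear.
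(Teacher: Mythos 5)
Your proposal is correct and follows exactly the route the paper intends: the paper gives no written proof for this corollary, leaving it as the immediate combination of Corollary \ref{cor:7}(ii) (the cyclic subgroup $\langle H_{\mathrm{id},\omega}\rangle$ of order $m$ extending $id_D$), the freeness of $A$ as a left $D$-module of rank $m$, and the preceding Proposition identifying every $H_{\mathrm{id},k}$ as an inner automorphism $G_c$ via Hilbert 90. Your added observation that Theorem \ref{thm:aut2} forces every automorphism extending $id_D$ to be of the form $H_{\mathrm{id},k}$, and that such automorphisms are automatically $F_0$-linear, is exactly the bookkeeping the paper leaves implicit.
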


\begin{example} \label{ex:code}
Let $F$ and $L$ be fields and let $K$ be a cyclic Galois extension of both $F$ and $L$ such that
 $[K:F] = n$, $[K:L] = m$,  ${\rm Gal}(K/F) = \langle \gamma \rangle$  and ${\rm Gal}(K/L) = \langle  \sigma \rangle$,
  and  $\sigma \circ\gamma = \gamma \circ\sigma$. Define $F_0=F \cap L$.

Let $D=(K/F, \gamma, c)$ be a cyclic division algebra of degree $n$ with $c \in F_0$, i.e.
$D\cong D_0\otimes_{F_0} K$ for some cyclic algebra $D_0=(F/F_0, \gamma, c)$.
Let $1,e.\dots,e^{n-1}$ be the canonical basis of $D$, that is $e^n=c,$ $ex=\gamma(x)e$ for every $x\in K$.
 For $x= x_0 + x_1e + x_2e^2 +\dots + x_{n-1}e^{n-1}\in D$, define an $L$-linear map
 $\sigma\in {\rm Aut}_L(D)$  via
 $$\sigma(x)=\sigma(x_0) +  \sigma(x_1)e +\sigma(x_2) e^2 +\dots +\sigma(x_{n-1}) e^{n-1}$$
(note that $c\in L$ implies  $\sigma(xy) = \sigma(x)\sigma(y)$ for all $x,y \in D$). Then
$\sigma\in {\rm Aut}_{F_0}(D)$ has order $m$.
For all $d \in D^\times$,
$$D[t;\sigma]/D[t;\sigma](t^m-d)=(D,\sigma,d)$$
is a generalized nonassociative cyclic algebra of degree $mn$ over $F_0$ (used for instance in \cite{PS15.4}).
$(D,\sigma,d)$ is associative if and only if $d\in F_0$. In the special case that $d \in F^\times$,
$$(D, \sigma, d)= (L/F_0, \gamma, c)\otimes_{F_0} (F/F_0, \sigma, d)$$
is the tensor product of an associative and a nonassociative cyclic algebra.

 If $F_0$ contains a primitive $m$th root of unity  and $d\in D^\times\setminus F_0$ is chosen such that
 $f(t)=t^m-d\in D[t;\sigma]$ is irreducible, then $(D, \sigma, d)$ is a cyclic extension of $D$ of order $m$,
 and all automorphisms extending $id_D$ are inner (Corollary \ref{cor:main}).
 Recall that if $m$ is prime then $f(t)=t^m-d\in D[t;\sigma]$ is irreducible if and only if
$d\not=\sigma^{m-1}(z)\cdots\sigma(z)z$ for all $z\in D$.

For $m=2$, this algebra is studied in \cite{P13.2}, and used in the  codes constructed in \cite{MO13}.
For $d\in F^\times$ the algebra is used in~\cite{R13}, see also \cite{PS15.4}.
\end{example}

\subsection{}
In the following, let $D$ be a division algebra which is finite-dimensional over its center $F = C(D)$,
$\sigma \in \mathrm{Aut}(D)$ an automorphism such that $\sigma \vert_F$ has finite order $q$ and fixed field
$F_0 = \mathrm{Fix}(\sigma) \cap F$.  If $D$ has degree $n$ then the
 associative generalized cyclic algebra $A = (D,\sigma,a)$
 has degree $qn$ over $F_0$. We choose $a \in F_0$ such that $A$ is a division algebra.

Now assume $F_0$ contains a primitive $q$th root of unity $\omega$. Then $\tau = H_{id_D,\omega}: A \rightarrow A$
generates a cyclic subgroup of $\mathrm{Aut}_{F_0}(A)$ of order $q$ by \cite[Theorem 6]{Am2} which consists of automorphisms
which all extend $id_D$.  We obtain the following generalization of \cite[Theorem 7]{Am2}:

\begin{theorem} \label{thm:Generalized Amitsur thm 7}
Suppose there exists $\rho \in \mathrm{Aut}(A)$, $b \in A$ and $1 \neq k \in F_0$ such that
\\ (1) $\tau$ commutes with $\rho$,
\\ (2) $\tau(b) = k \rho(k) \cdots \rho^{m-1}(k) b$,
\\ (3) $k^q$ is a primitive $m$th root of unity,
\\ (4) $t^m-b \in A[t;\rho]$ is irreducible, and
\\ (5) the algebra $B = A[t;\rho]/A[t;\rho] (t^m-b)$ is either associative, or finite-dimensional over $F_0 \cap \mathrm{Fix}(\rho)$,
or finite-dimensional over $\mathrm{Nuc}_r(B)$. \\
Then $B$ is a nonassociative cyclic extension of $D$ of degree $mq$ which contains $A$.
\end{theorem}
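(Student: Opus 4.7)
The plan is to verify the three clauses in the definition of a nonassociative cyclic extension for $B$ in turn. First, hypothesis (4) and Petit's irreducibility criterion (as used throughout the paper) immediately give that $B = S_f$ is a nonassociative division algebra, with $A$ sitting inside as the constants $\{g \in A[t;\rho] : \deg g < 1\}$. Writing $T$ for the image of $t$, the algebra $B$ is free of rank $m$ as a left $A$-module with basis $1, T, \ldots, T^{m-1}$; since $A = (D,\sigma,a)$ is itself free of rank $q$ as a left $D$-module with basis $1, t_A, \ldots, t_A^{q-1}$ (where $t_A$ denotes the standard generator), $B$ is free of rank $mq$ as a left $D$-module.

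Next I would construct a single $F_0$-automorphism $H$ of $B$ extending $id_D$, modelled on the formula $H_{\tau,k}$ in Theorem~\ref{thm:aut2}(i) with $(A,\rho,b,T)$ in the role of $(D,\sigma,d,t)$: set
\[
H\Big(\sum_{i=0}^{m-1} x_i T^i\Big) = \tau(x_0) + \sum_{i=1}^{m-1} \tau(x_i) \prod_{l=0}^{i-1}\rho^l(k)\, T^i.
\]
Equivalently, $H$ is the map induced by the ring endomorphism $\widetilde H$ of $A[t;\rho]$ with $\widetilde H|_A = \tau$ and $\widetilde H(t) = k t$. Checking that $\widetilde H$ is a ring automorphism reduces to $\widetilde H(ta) = \widetilde H(t)\widetilde H(a)$ for $a \in A$, which in turn reduces to $\tau\rho(a)\, k = k\, \rho\tau(a)$; this is exactly hypothesis (1) combined with $k \in F_0 = C(A)$ (the latter being implicit in (3)). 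That $\widetilde H$ descends to $B$ then follows from hypothesis (2), which says precisely $\widetilde H(f) = N(k)\, f$ with $N(k) = k\rho(k)\cdots\rho^{m-1}(k) \in F_0^\times$, so the left ideal $A[t;\rho]\,f$ is preserved. Hypothesis (5) is what permits applying the automorphism analysis of Theorem~\ref{thm:aut2} (and its generalization in \cite{CB}) to the nonassociative algebra $B$ in this fashion.

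To finish, I would compute the order of $H$. Since $\tau = H_{id_D,\omega}$ acts as $id_D$ on $D$ it fixes $k \in F_0 \subseteq F \subseteq D$, so a short induction gives $H^j(T) = k^j T$ for every $j \geq 0$; combined with $H|_A = \tau$ this yields $H^q|_A = \tau^q = id_A$ and $H^q(T) = k^q T$. By (3), $k^q$ is a primitive $m$th root of unity, so $H^q$ has order exactly $m$. Since $\tau$ has order $q$ on $A$, $H^j|_A = \tau^j \neq id_A$ whenever $q \nmid j$, forcing the order of $H$ to be exactly $qm$. Moreover $H^j|_D = \tau^j|_D = id_D$ for every $j$, so $\langle H\rangle$ is the desired cyclic subgroup of $\mathrm{Aut}(B)$ of order $mq$ consisting of automorphisms extending $id_D$; together with the first paragraph this establishes that $B$ is a nonassociative cyclic extension of $D$ of degree $mq$. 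The main technical hurdle is the middle step: ensuring that the formula genuinely defines an $F_0$-automorphism of the nonassociative algebra $B$, which is precisely the content for which hypothesis (5) was inserted.
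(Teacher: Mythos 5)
Your proof is correct and follows essentially the same route as the paper's: freeness of rank $mq$ over $D$, the automorphism $H_{\tau,k}$ obtained from hypotheses (1) and (2), and the order computation $H_{\tau,k}^q = H_{id_A,k^q}$ combined with (3) to get order exactly $mq$. The one slip is the role you assign to hypothesis (5): the paper uses (4) \emph{and} (5) together to conclude that $B$ is a division algebra via Petit's criterion (irreducibility alone only yields a right division ring / absence of zero divisors unless one of the finiteness conditions in (5) holds), not to license the automorphism analysis; since you do invoke (5), this is a misattribution rather than a genuine gap.
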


\begin{proof}
Since $B$ is a free left $A$-module of rank $m$ and $A$ is a free left $D$-module of rank $q$,  $B$ is a free
left $D$-module of rank $mq$. Furthermore, (4) and (5) yield that $B$ is a division algebra by \cite[(7)]{P66}. Define the map
$$H_{\tau,k}: B \rightarrow B, \ \sum_{i=0}^{m-1} x_i t^i \mapsto \tau(x_0) + \sum_{i=1}^{m-1} \tau(x_i) \big(
\prod_{l=0}^{i-1} \rho^l(k) \big) t^i \qquad (x_i \in A),$$
then (1) and (2) together imply that $H_{\tau,k}$ is an automorphism of $B$ by \cite[Theorem 4]{BP}.

$H_{\tau,k}$ has order $mq$: We have $\tau(k) = k$ because $k \in F_0 \subset D$. Therefore straightforward
calculations yield $H_{\tau,k}^2 = H_{\tau,k} \circ H_{\tau,k} = H_{\tau^2,k\tau(k)} = H_{\tau^2, k^2}$, $H_{\tau,k}^3 = H_{\tau^3,k^3}$ etc., thus $H_{\tau,k}$ will have order at least $q$. After $q$ steps we obtain $H_{\tau,k}^q = H_{id_A,v}$ with $v = k^q$ and so $H_{\tau,k}$ has order $mq$ by (3).

Finally $H_{\tau,k} \vert_D = \tau \vert_D = id_D$, hence we conclude $B$ is a nonassociative cyclic extension of
$D$ of degree $mq$.
\end{proof}

%
%

\section{When is an ring a nonassociative cyclic extension?}\label{sec:overring}

A nonassociative ring $A\not=0$  is called  a \emph{right division ring}, if for all $a\in A$, $a\not=0$,
the right multiplication  with $a$, $R_a(x)=xa$, is bijective.
If $D$ is a division ring and $f$ is irreducible, then $S_f=D[t;\sigma]/D[t;\sigma]f$ is
a right division algebra and has no zero divisors (\cite[(6)]{P66} or  \cite{BP18}).

\begin{theorem} (cf. \cite[(3), (6)]{P66})\label{thm:(3)}
\\ (i) Let $S$ be a nonassociative ring with multiplication  $\circ$. Suppose that
\\ (1)  $S$ has an associative subring $D$ which is a division algebra
and $S$ is a free left $D$-module of rank $m$, and there is $t\in S$ such that
 $t^j$, $0\leq i<m$ is a basis of $S$ over $D$, when defining $t^{j+1}=t\circ t^{j}$, $t^0=1$;
\\ (2) for all $a\in D$, $a\not=0$, there are $a_1, a_2\in D$, $a_1\not=0$, such that $t\circ a=a_1\circ t+a_2$;
\\ (3) for all $a,b,c\in D$, $i+j<m$, $k<m$, we have
$[a\circ t^i,b\circ t^j,c\circ t^k]=0.$
\\
 Then $S\cong S_f$ with $f(t)\in D[t;\sigma,\delta]$ and $\sigma$, $\delta$ defined via
$t\circ a=\sigma(a)\circ t+\delta(a)$ and where the polynomial
$f(t)=t^m-\sum_{i=0}^{m-1}d_it^i$ is given by
$t^m=\sum_{i=0}^{m-1}d_it^i$ with $t^0=1,$ $t^{i+1}=t\circ t^i$, $0\leq i<m$.
\\ (ii) If $S$ is a right division ring in (i) then $f$ is irreducible.
\end{theorem}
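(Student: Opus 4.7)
The plan is to extract the data $(\sigma, \delta, f)$ from hypotheses (1)--(3), then verify that the tautological left $D$-linear bijection $\phi : S \to S_f$ sending $t^i \mapsto \overline{t^i}$ is a ring isomorphism. Part (ii) will then follow from a short zero-divisor argument in $S_f$.

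By hypothesis (2) and the left $D$-linear independence of $1, t$, the coefficients $\sigma(a), \delta(a) \in D$ in $t \circ a = \sigma(a) \circ t + \delta(a)$ are uniquely determined (with $\sigma(0) = \delta(0) = 0$). Biadditivity of $\circ$ makes $\sigma, \delta$ additive, and the assumption $a \neq 0 \Rightarrow a_1 \neq 0$ makes $\sigma$ injective. To derive $\sigma(ab) = \sigma(a)\sigma(b)$ and $\delta(ab) = \sigma(a)\delta(b) + \delta(a) b$, I would compute $t \circ (ab)$ two ways: directly it equals $\sigma(ab) \circ t + \delta(ab)$; alternatively, the vanishing associators $[t,a,b]$, $[\sigma(a),t,b]$ and $[\sigma(a),\sigma(b),t]$ (all instances of (3), since each relevant total $t$-degree in the first two slots is $\leq 1 < m$) allow the successive rebracketing $t \circ (ab) = (t\circ a)\circ b = (\sigma(a)\circ t)\circ b + \delta(a)\,b = \sigma(a)\circ(t\circ b) + \delta(a)\,b = (\sigma(a)\sigma(b))\circ t + \sigma(a)\delta(b) + \delta(a)\,b$. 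Comparing coefficients yields the required identities, so $\sigma$ is a ring endomorphism, $\delta$ is a left $\sigma$-derivation, and the twisted polynomial ring $D[t;\sigma,\delta]$ is defined. The basis hypothesis lets me write $t^m = \sum_{i=0}^{m-1} d_i \circ t^i$ uniquely, and I set $f(t) := t^m - \sum_{i=0}^{m-1} d_i t^i \in D[t;\sigma,\delta]$.

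To prove $\phi$ is multiplicative, I would first show by induction on $i$ that for $i + j < m$ the product $(a \circ t^i) \circ (b \circ t^j)$ in $S$ matches the polynomial product $at^i \cdot bt^j$ computed in $D[t;\sigma,\delta]$ and read through the canonical basis $\{1, t, \dots, t^{m-1}\}$. The inductive step uses the rebracketings $(a \circ t^i)\circ(b\circ t^j) = a \circ (t^i \circ (b \circ t^j))$ and $t^i \circ (b\circ t^j) = t \circ (t^{i-1} \circ (b \circ t^j))$, both sanctioned by (3) since the relevant $t$-degrees are bounded by $i + j < m$, together with the commutation rule $t \circ c = \sigma(c)\circ t + \delta(c)$. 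Products that would reach total degree $\geq m$ are handled by substituting $t^m = \sum_l d_l \circ t^l$ in $S$, which mirrors the right-division reduction $\bmod_r f$ in $S_f$ step by step. For part (ii), if $f = gh$ in $D[t;\sigma,\delta]$ with $0 < \deg g, \deg h < m$, then $\bar g, \bar h \in S_f$ are nonzero representatives, yet $\bar g \circ \bar h = gh \bmod_r f = 0$, so right multiplication by $\bar h$ has nontrivial kernel, contradicting the right-division hypothesis on $S \cong S_f$.

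The main obstacle is the multiplicativity check: the degree restrictions $i + j < m$ and $k < m$ in (3) are chosen precisely to justify every rebracketing needed while no $t^m$-reduction has yet been triggered, and the induction must be organised so that the substitution $t^m \mapsto \sum_l d_l \circ t^l$ in $S$ is applied at the same moments as $\bmod_r f$ in $S_f$, degree by degree, because once $t^m$ is rewritten the associator identities of (3) no longer cover the resulting elements.
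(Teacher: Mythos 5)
The paper offers no proof of Theorem \ref{thm:(3)} --- it is quoted from Petit \cite[(3), (6)]{P66} --- and your reconstruction follows essentially the same route as the cited argument: extract $\sigma$ and $\delta$ from the commutation rule and the uniqueness of coefficients in the left $D$-basis, verify the endomorphism and $\sigma$-derivation identities by the vanishing associators, match the step-by-step substitution $t^m\mapsto\sum_l d_l\circ t^l$ in $S$ with the reduction ${\rm mod}_r\, f$ in $S_f$, and read off irreducibility from the injectivity of right multiplication. Your sketch is correct; the one point worth making explicit is that for $i+j\ge m$ the rebracketings you invoke, $[a,t^i,b\circ t^j]=0$ and $[t,t^{i-1},b\circ t^j]=0$, remain instances of hypothesis (3) because the first two slots always carry total degree $i<m$ and the third slot degree $j<m$, so the case of products of large degree needs no hypothesis beyond what is stated.
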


 Theorem \ref{thm:(3)}  yields the nonassociative analogues to the existence conditions for associative cyclic
 extensions in \cite[Theorem 6]{Am2}.

\begin{theorem} \label{thm:class2}
(i) Let $S$ be a nonassociative ring with multiplication  $\circ$, which has a field $K$ as a subring,
and is a free left $K$-vector space of dimension $m$. Suppose that
\\ (1) there is $t\in S$ such that
 $t^i$, $0\leq i<m$, is a basis of $S$ over $K$ when defining $t^0=1,$ $t^{i+1}=t\circ t^i$, $0\leq i<m$;
\\ (2) for all $a\in K$, $a\not=0$, there is $a'\in K^\times$, such that $t\circ a=a'\circ t$;
\\ (3) for all $a,b,c\in K$, $i+j<m$, $k<m$, we have
$[a\circ t^i,b\circ t^j,c\circ t^k]=0;$
\\ (4)  $t^m=d$ for some $d\in K^\times$;
\\ (5) the map $\sigma:K\to K$, $\sigma(a)=a'$,
 has order $m$ and fixed field $F=\{a\in K\,|\, t\circ a=a\circ t\}$  containing a primitive $m$th root of unity $\omega$,
and $K/F$ is a finite cyclic Galois extension.
\\
 Then $S\cong S_f=(K/F,\sigma,d)$ with $f(t)=t^m-d\in K[t;\sigma]$.
\\ (ii) If $S$ is a right division ring in (i) then $f$ is irreducible and $S\cong (K/F,\sigma,d)$ is a
nonassociative cyclic extension of $K$ of degree $m$.
\end{theorem}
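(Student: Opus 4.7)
The plan is to invoke Theorem \ref{thm:(3)} with the associative division subring taken to be the field $K$ itself, in order to recognize $S$ as a quotient $S_f$ of an Ore polynomial ring over $K$. The structural hypotheses (1)--(3) of our theorem are precisely tailored so that conditions (1)--(3) of Theorem \ref{thm:(3)} are satisfied: our (1) is verbatim, our (3) is verbatim (with the associative subring $D = K$), and our (2) is the special case $a_2 = 0$ of Theorem \ref{thm:(3)}(2). Theorem \ref{thm:(3)}(i) then yields $S \cong S_f$ for some $f(t) = t^m - \sum_{i=0}^{m-1} d_i t^i \in K[t;\sigma,\delta]$, with $\sigma,\delta$ determined by $t\circ a = \sigma(a)\circ t + \delta(a)$ and the $d_i$ determined by $t^m = \sum_{i=0}^{m-1} d_i t^i$ in $S$.

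The absence of a constant term in our condition (2), $t\circ a = a'\circ t$, forces $\delta(a) = 0$ for every $a \in K$ and identifies $\sigma(a) = a'$ with the automorphism of condition (5). Condition (4) gives $t^m = d$, so $d_0 = d$ and $d_i = 0$ for $i \geq 1$, hence $f(t) = t^m - d \in K[t;\sigma]$. Invoking the remainder of (5) --- $\sigma$ has order $m$ with fixed field $F$, $K/F$ is cyclic Galois of degree $m$, and $F$ contains a primitive $m$th root of unity $\omega$ --- I conclude
\[
S \cong S_f = K[t;\sigma]/K[t;\sigma](t^m - d) = (K/F,\sigma,d),
\]
which is a nonassociative cyclic algebra of degree $m$ in the sense of Section \ref{sec:nonass}. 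This settles (i).

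For (ii), the assumption that $S$ is a right division ring combined with Theorem \ref{thm:(3)}(ii) immediately gives that $f(t) = t^m - d$ is irreducible in $K[t;\sigma]$, so $(K/F,\sigma,d)$ is a division algebra by \cite[(7)]{P66}. Because $F$ contains a primitive $m$th root of unity, Theorem \ref{thm:Nonassociative cyclic algebra is cyclic extension} applies: the subgroup $\langle H_{id,\omega}\rangle$ of ${\rm Aut}_F(S_f)$ is cyclic of order $m$ and each of its elements restricts to $id_K$, so $S$ is a nonassociative cyclic extension of $K$ of degree $m$. There is no real obstacle --- the whole proof is bookkeeping once the Ore-quotient structure has been extracted from Theorem \ref{thm:(3)}; the interesting input, namely that a suitable cyclic subgroup of automorphisms fixing $K$ exists, has already been packaged into Theorem \ref{thm:Nonassociative cyclic algebra is cyclic extension}.
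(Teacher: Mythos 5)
Your proposal is correct and follows essentially the same route as the paper: conditions (1)--(4) are fed into Theorem \ref{thm:(3)} (Petit's structure result) to identify $S\cong S_f$ with $f(t)=t^m-d\in K[t;\sigma]$, and part (ii) is then concluded by irreducibility of $f$ plus Theorem \ref{thm:Nonassociative cyclic algebra is cyclic extension}. Your explicit remarks that (2) forces $\delta=0$ and that Theorem \ref{thm:(3)}(ii) supplies the irreducibility are just slightly more detailed bookkeeping of the same argument.
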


\begin{proof}
(1), (2) and (3) imply that $S\cong S_f$ with $f\in K[t;\sigma]$ and $\sigma$ defined via
$t\circ a=\sigma(a)\circ t,$ i.e. $\sigma(a)=a',$
and where the polynomial $f(t)=t^m-\sum_{i=0}^{m-1}d_it^i$ is given by $t^m=\sum_{i=0}^{m-1}d_it^i$
for some suitably chosen $d_i$ (cf. \cite[(3)]{P66}).
(4) implies that indeed $f(t)=t^m-d$. (5) guarantees that
$(K/F,\sigma,d)$ where  $F$ contains a primitive $m$th root of unity $\omega$.
\\ (ii) Here we are in the setup of
Theorem \ref{thm:Nonassociative cyclic algebra is cyclic extension} which  yields the assertion:
 $F$ contains a primitive $m$th root of unity $\omega$, so $\langle H_{id,\omega}\rangle$
is a cyclic subgroup  of order $m$ of the division algebra $(K/F,\sigma,d)$.
\end{proof}

For nonassociative cyclic extensions of a central simple algebra $D$
we obtain from Theorem \ref{thm:(3)}:

\begin{theorem} \label{thm:class3}
 (i) Let $S$ be a nonassociative ring with multiplication  $\circ$, which has an associative
  subring $D$ which is a division algebra and $S$ is a free left $D$-module of rank $m$. Suppose that
\\ (1)  there is $t\in S$ such that
 $t^i$, $0\leq i<m$, is a basis of $S$ over $D$ when defining $t^0=1,$ $t^{i+1}=t\circ t^i$, $0\leq i<m$;
\\ (2) for all $a\in D$, $a\not=0$, there are $a'\in D$, $a'\not=0$, such that $t\circ a=a'\circ t$;
\\ (3) for all $a,b,c\in D$, $i+j<m$, $k<m$, we have $[a\circ t^i,b\circ t^j,c\circ t^k]=0$;
\\ (4)  $t^m=d$;
\\ (5) the map $\sigma:D\to D$, $\sigma(a)=a'$, has order $m$, fixed field $\{a\in D\,|\, t\circ a=a\circ t\}$
and $D/F$ is a central simple algebra, where $F_0=F\cap {\rm Fix}(\sigma)$
 contains a primitive $m$th root of unity $\omega$.
\\
 Then $S\cong S_f=(D,\sigma,d)$ with $f(t)=t^m-d\in D[t;\sigma]$.
\\ (ii) If $S$ is a right division ring and $D$ a central simple algebra in (i), then $f$ is irreducible  and $S$ a
nonassociative cyclic extension of $D$ of degree $m$.
\end{theorem}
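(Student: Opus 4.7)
The plan is to reduce everything to Theorem \ref{thm:(3)} and then invoke Corollary \ref{cor:main}; almost all of the work has been done in those earlier results, so this proof should be a matter of matching hypotheses.

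For part (i), I would observe that hypotheses (1), (2), (3) are exactly the hypotheses (1), (2), (3) of Theorem \ref{thm:(3)}(i) (with $a_2 = 0$ in condition (2)). Applying that theorem gives a ring isomorphism $S \cong S_f$ with $f(t) \in D[t;\sigma,\delta]$, where $\sigma,\delta$ are defined by $t \circ a = \sigma(a) \circ t + \delta(a)$ and $f(t) = t^m - \sum_{i=0}^{m-1} d_i t^i$ is determined by writing $t^m$ in the basis $1, t, \ldots, t^{m-1}$. But hypothesis (2) of the present theorem forces $\delta \equiv 0$, so $f \in D[t;\sigma]$, and hypothesis (4) forces $d_0 = d$ and $d_i = 0$ for $i \geq 1$, so that $f(t) = t^m - d$. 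Hypothesis (5) then guarantees that this $\sigma$ fits the setup of Section \ref{sec:gencyclic}: $\sigma|_F$ has order $m$ with $F_0 = F \cap \mathrm{Fix}(\sigma)$ and $D$ is central simple over $F$, so $S_f = (D,\sigma,d)$ in the sense of our definition.

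For part (ii), once $S$ is a right division ring, Theorem \ref{thm:(3)}(ii) gives that $f(t) = t^m - d$ is irreducible in $D[t;\sigma]$, and hence $(D,\sigma,d)$ is a nonassociative generalized cyclic division algebra of degree $mn$ over $F_0$ by \cite[(7)]{P66}. Since $F_0$ contains a primitive $m$th root of unity by (5), Corollary \ref{cor:main} immediately gives that $S \cong (D,\sigma,d)$ is a (nonassociative) cyclic extension of $D$ of degree $m$, with all automorphisms extending $id_D$ being inner.

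The main ``obstacle'' is not analytic but bookkeeping: one must carefully verify that condition (5) as stated really produces the setup assumed at the start of Section \ref{sec:gencyclic} (i.e. that $\sigma|_F$, and not merely $\sigma$ itself, has order $m$ with fixed field $F_0$, so that $F/F_0$ is a cyclic Galois extension of degree $m$). If $\sigma$ has order $m$ on all of $D$, its restriction to the center $F$ is a field automorphism of order dividing $m$; the primitive $m$th root of unity in $F_0$ together with the requirement that $d \in D^\times$ make $f(t)=t^m-d$ nontrivial, and the identification $F_0 = F \cap \mathrm{Fix}(\sigma)$ is imposed by (5) itself, so no surprises arise. With these identifications in place, the reduction to Corollary \ref{cor:main} is immediate, and no further computation is needed.
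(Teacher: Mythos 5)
Your proposal is correct and follows essentially the same route as the paper: hypotheses (1)--(3) give $S\cong S_f$ via Theorem \ref{thm:(3)} (with $\delta=0$ forced by (2)), (4) pins down $f(t)=t^m-d$, (5) places you in the setting of Section \ref{sec:gencyclic}, and then irreducibility of $f$ from Theorem \ref{thm:(3)}(ii) plus the primitive $m$th root of unity in $F_0$ yields the cyclic subgroup $\langle H_{id,\omega}\rangle$ of order $m$. The only cosmetic difference is that you cite Corollary \ref{cor:main} where the paper cites Theorem \ref{thm:aut2} (i.e.\ Corollary \ref{cor:7}(ii)), but the former is an immediate consequence of the latter, so the argument is the same.
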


\begin{proof}
(1), (2) and (3) imply that $S\cong S_f$ with $f\in D[t;\sigma]$ and $\sigma$ defined via
$t\circ a=\sigma(a)\circ t,$ i.e. $\sigma(a)=a',$
and where the polynomial $f(t)=t^m-\sum_{i=0}^{m-1}d_it^i$ is given by $t^m=\sum_{i=0}^{m-1}d_it^i$
for some suitably chosen $d_i$ (cf. \cite[(3)]{P66}).
(4) implies $f(t)=t^m-d$. (5) guarantees that
$S\cong (D,\sigma,d)$ where  $F$ contains a primitive $m$th root of unity $\omega$.
\\ (ii) Here we are in the setup of
Theorem \ref{thm:aut2} which  yields the assertion, since
 $F$ contains a primitive $m$th root of unity $\omega$, $\langle H_{id,\omega}\rangle$
is a cyclic subgroup  of order $m$ of the division algebra $(D,\sigma,d)$.
\end{proof}


\end{document}